\DeclareMathOperator{\Rico}{Ric_o}
\def\cleardoublepage{\clearpage\if@twoside \ifodd\c@page\else%
         \hbox{}%
     \thispagestyle{empty}
     \newpage%
     \if@twocolumn\hbox{}\newpage\fi\fi\fi}
\theoremstyle{plain}
\newtheorem{theorem}{Theorem}[section]
\newtheorem{lemma}[theorem]{Lemma}
\newtheorem{remark}[theorem]{Remark}
\newtheorem{corollary}[theorem]{Corollary}
\numberwithin{equation}{section}
\theoremstyle{definition}
\begin{document}

\title[]{Blow-up versus global existence of solutions \\for reaction-diffusion equations \\ on classes of Riemannian manifolds}

\author{Gabriele Grillo}
\address{\hbox{\parbox{5.7in}{\medskip\noindent{Dipartimento di Matematica,\\
Politecnico di Milano,\\
   Piazza Leonardo da Vinci 32, 20133 Milano, Italy.
   \\[3pt]
        \em{E-mail address: }{\tt
          gabriele.grillo@polimi.it
          }}}}}

\author{Giulia Meglioli}
\address{\hbox{\parbox{5.7in}{\medskip\noindent{Dipartimento di Matematica,\\
Politecnico di Milano,\\
   Piazza Leonardo da Vinci 32, 20133 Milano, Italy.
   \\[3pt]
        \em{E-mail address: }{\tt
          giulia.meglioli@polimi.it
          }}}}}

\author{Fabio Punzo}
\address{\hbox{\parbox{5.7in}{\medskip\noindent{Dipartimento di Matematica,\\
Politecnico di Milano,\\
   Piazza Leonardo da Vinci 32, 20133 Milano, Italy. \\[3pt]
        \em{E-mail address: }{\tt
          fabio.punzo@polimi.it}}}}}

\keywords{Reaction diffusion equations. Riemannian manifolds. Blow-up. Global existence. Fujita type exponent.}

\subjclass[2010]{Primary: 35K57. Secondary: 35B44, 58J35, 35K65, 35R01.}

\maketitle

\maketitle              

\begin{abstract} It is well known from the work of \cite{BPT} that the Fujita phenomenon for reaction-diffusion evolution equations with power nonlinearities does not occur on the hyperbolic space $\mathbb{H}^N$, thus marking a striking difference with the Euclidean situation. We show that, on classes of manifolds  in which the bottom $\Lambda$ of the $L^2$ spectrum of $-\Delta$ is strictly positive (the hyperbolic space being thus included), a different version of the Fujita phenomenon occurs for other kinds of nonlinearities, in which the role of the critical Fujita exponent in the Euclidean case is taken by $\Lambda$. Such nonlinearities are time-independent, in contrast to the ones studied in \cite{BPT}. As a consequence of our results we show that, on a class of manifolds much larger than the case $M=\mathbb{H}^N$ considered in \cite{BPT}, solutions to \eqref{problema} with power nonlinearity $f(u)=u^p$, $p>1$, and corresponding to sufficiently small data, are global in time. Though qualitative similarities with similar problems in bounded, Euclidean domains can be seen in the results, the methods are significantly different because of noncompact setting dealt with.
\end{abstract}

\bigskip
\bigskip

\section{Introduction}

We investigate existence of global in time solutions, versus blow-up in finite time, to nonlinear reaction-diffusion problems of the following type:
\begin{equation}\label{problema}
\begin{cases}
\, u_t= \Delta u +\,f(u) & \text{in}\,\, M\times (0,T) \\
\,\; u =u_0\ge0 &\text{in}\,\, M\times \{0\}\,,
\end{cases}
\end{equation}
where $\Delta$ is the Laplace-Beltrami operator on a Riemannian manifold $M$, 
$T\in (0,\infty]$ and $f:[0,+\infty)\to [0,+\infty)$ is e.g. a locally Lipschitz, increasing function. 
Further specification of the assumptions on the manifold $M$ and on the nonlinearity $f$ will be made later on. A crucial parameter, according to whose value the behaviour of solutions will change, will be the value of $f'(0)$ when $f$ is differentiable in $x=0$. It will be usually required that $u_0\in C(M)\cap L^\infty(M)$ to ensure the existence of classical solutions at least up to a certain time.

The analogue of \eqref{problema} in the Euclidean setting has a long history especially in the particularly important case of power nonlinearities, i.e. for the problem
\begin{equation}\label{problemaeuclideo}
\begin{cases}
\, u_t= \Delta u +\,|u|^{p-1}u & \text{in}\,\, \mathbb{R}^n\times (0,T) \\
\,\; u =u_0 &\text{in}\,\, \mathbb{R}^n\times \{0\}\,,
\end{cases}
\end{equation}
where it is assumed that $u_0\in L^\infty(\mathbb{R}^n)$. It has been shown by Fujita in \cite{F}, and in \cite{H} and \cite{KST} for the critical case, that for problem \eqref{problemaeuclideo} the following facts hold:
\begin{enumerate}[a)]
\item If  $1 < p\le p^*:=\frac{N+2}N$, \eqref{problemaeuclideo} does not possess nontrivial global solutions.
\item If $p > p^*$ solutions corresponding to data that are sufficiently small in a suitable sense, are global in time.
\end{enumerate}

It should be noticed that, by a generalization of a result of Kaplan \cite{K}, solutions corresponding to \it sufficiently large \rm data blow up for any $p>1$. A complete account of results concerning blow-up and global existence of solutions to semilinear parabolic equations posed in $\mathbb R^n$ can be found, e.g., in \cite{BB}, \cite{DL}, \cite{Levine}, \cite{Vaz1} and in references therein.

\medskip

In the case of evolution equation posed on Riemannian manifolds the situation may change completely. In fact, an analogue of \eqref{problemaeuclideo} has been studied  in \cite{BPT} in the important context of the \it hyperbolic space \rm $\mathbb{H}^n$, namely on the simply connected manifold of constant sectional curvature equal to $-1$. It is shown there that for \it all \rm $p>1$ sufficiently small initial data give rise to global in time solutions. A kind of Fujita phenomenon nontheless takes place but when a \it time dependent \rm modification of \eqref{problemaeuclideo} is taken into account. Namely, if the reaction term $u^p$ (considering nonnegative data, hence nonnegative solutions) is replaced by $e^{\alpha t}u^p$, $\alpha>0$ being a fixed parameter, then a Fujita-type phenomenon then takes place, the threshold value being $p^\sharp:=1+\frac\alpha\Lambda$ where $\Lambda:=(N-1)^2/4$ is the bottom of the $L^2$ spectrum of $-\Delta$ on $\mathbb{H}^n$. Informally, one might say that for this to hold the nonlinearity must be amplified exponentially as time grows, in fact it is also shown in \cite{BPT} that if the exponential factor in time is replaced by a power of time the Fujita phenomenon still does not occur. See also \cite{WY} for a careful analysis of the critical case $p=p^\sharp$, in which the authors show, combining their results with the ones of \cite{BPT}, that in such case global in time solutions exist for all values of $\alpha$, thus marking a further difference with the Euclidean case. Further precise results on the lifespan of solutions that are not global are given in \cite{WY2}.

Recently, a number of results concerning blow-up and global existence for solutions of nonlinear parabolic equations with power-like reaction term and \it nonlinear, slow diffusion of porous medium type \rm has also been obtained, on some classes of Riemannian manifolds, in \cite{MMP}, \cite{MeGP1}, \cite{MeGP2}, \cite{Sun}, \cite{Z}. The results for this class of equations show usually several differences with the case involving a linear diffusion, already in the Euclidean case, see \cite{gala}.

\medskip
Our goal here will be to present some result for different type of \it time independent \rm nonlinearities in which a new kind of Fujita phenomenon takes place, in a wide class of manifolds that includes the hyperbolic space. We shall consider noncompact complete Riemannian manifold  $M$ of infinite volume, with dimension $N\ge 3$, and require that some further conditions on $M$, to be described below, hold. In our first result, Theorem \ref{teo1}), we require the following additional condition on $M$:

\begin{itemize}
\item $M$ is stochastically complete, and
$\lambda_1(M):= \inf \operatorname{spec}(-\Delta)\,\,>0$, where $\operatorname{spec}(-\Delta)$ is the $L^2$ spectrum of the Laplace-Beltrami operator $-\Delta$.
\end{itemize}

Under such assumptions, we prove \it nonexistence of global solutions for problem \eqref{problema} e.g. if $f$ is convex and increasing,  $f'(0)>\lambda_1(M)$, and $1/f$ is integrable at infinity\rm.

\medskip

Stochastic completeness is a well-studied property of Riemannian manifolds, and it amounts to requiring that $T_t1=1$ for all $t>0$, or equivalently that
\begin{equation}\label{stoc}
\int_M p(x,y,t) \, d\mu(y)\,= \,1,\quad \text{for all}\,\,\, x\in M,\,\, t>0.
\end{equation}
where $p(x,y,t)$ is the heat kernel of the manifold $M$ and $\mu$ the Riemannian measure. See e.g. \cite{Grig, Grig3} for a number of conditions on $M$ ensuring that stochastic completeness holds. For example, it suffices that, for some $o\in M$, the function $ r \mapsto \frac{ r}{\log \mathcal{V}(o,r)}$ is not integrable at infinity, where $\mathcal{V}(o,r)$ is the volume of the geodesic ball of radius $r$ centered at $o$. Note that this is true in particular if $\mathcal{V}(o,r)\le C e^{ar^2}$ for suitable $C,a>0$. This allows e.g. sectional curvatures to tend to $-\infty$ at the infinity of $M$ (namely when $\varrho(o,x)\to+\infty$, $\varrho$ being the Riemannian distance and $o$ being fixed in $M$), at most quadratically, in a suitable precise sense, see also \cite{GIM} for relations to nonlinear elliptic and parabolic equations on $M$. As for the assumption $\lambda_1(M)\,>0$ we comment that a well-known sufficient condition for this to hold is that $\mathrm{sec}\, \le c<0$, $\mathrm{sec}$ denoting sectional curvatures. Thus, the class of manifolds on which the above result works is large, as it includes e.g. all those manifolds whose sectional curvatures are pinched between two strictly negative constants, and in particular the hyperbolic space.

\medskip

In our second result, Theorem \ref{teo2}, the additional assumption we require on $M$ beside the previous ones is the following:
\begin{itemize}
\item the following Faber-Krahn inequality holds:
for some $c>0$, for any non-empty relatively compact open subset $\Omega\subset M$,
\begin{equation}\label{FK}\lambda_1(\Omega)\geq \frac{c}{[\mu(\Omega)]^{\frac 2N}},\end{equation}
where $\lambda_1(\Omega)$ is the first eingevalue of the Laplace operator on $\Omega$ completed with homogeneous Dirichlet boundary conditions, and $\mu$ is the Riemannian measure.
\end{itemize}

Under such assumptions, we show \it existence of global solutions for small data e.g. if $f$ is differentiable at $x=0$ with $f'(0)< \lambda_1(M)$\rm.

\medskip

We refer e.g. to \cite[Cor. 14.23, Cor. 15.17]{Grig3} for equivalent conditions for the Faber-Krahn inequality to hold. In particular, its validity is implied by on-diagonal bounds for the heat kernel in the form $p(t,x,x)\le c\,t^{-N/2}$ for all $x\in M$, $t>0$, or by the validity of the Sobolev (or the Nash) inequality on $M$, which is in turn satisfied e.g. when sec$\,\le0$ on $M$, sec denoting sectional curvatures.

\medskip

Observe that our blow-up result is similar in character to the well-known blow-up result for bounded domains of $\mathbb R^n$ (see, e.g., \cite[Section 3.2]{BB}). However, the methods of proof exploited for bounded domains do not work on general Riemannian manifolds, thus our arguments are completely different. Indeed, in bounded domains the blow-up result is usually obtained by means of the Kaplan method (see \cite{BB, K}), which makes use of the first eigenvalue and of the first eigenfunction of the Laplacian. In order to extend that argument to a general Riemannian manifold, it would be necessary to know precisely the behaviour at infinity of the positive solution $\phi$ to
\[\Delta \phi+\lambda_1(M) \phi = 0 \quad \text{ in }\, M\,\]
and in particular its integrability properties w.r.t. the Riemannian measure, which are not known in general, e.g. on the hyperbolic space $\mathbb H^n$ $\phi$ belongs just to $L^{2+\varepsilon}$ for all $\varepsilon>0$.  The Kaplan method yields indeed partial results on the subclass of Cartan-Hadamard manifolds, i.e. simply connected Riemannian manifolds with nonpositive sectional curvatures, with Ricci curvature bounded from below, see \cite{Pu22}  when $f(u)=u^p$, but the method can not be pushed to get the sharp threshold value $\lambda_1(M)$ provided in Theorem \ref{teo1}, \ref{teo2}, even in the special case of Cartan-Hadamard manifolds and, in fact, even on the special case $M=\mathbb H^n$.


Let us mention that some blow-up results in bounded domains have also been established in \cite{M1}, \cite{M2}, for more general operators, by means of the method of sub-- and supersolutions. Those results seem to be quite implicit in character, and they are based on the asymptotic behaviour for large times of solutions to the associated linear problem.

\normalcolor

\medskip

As a further comment, we mention that it is easy to show that, on a wide class of manifolds, characterized by the validity of the parabolic maximum principle, a sufficient condition for which is e.g. the very general condition \eqref{bound2} below, \it blow-up of solutions corresponding to large data occurs\rm, provided the nonlinearity $f$ is convex and $1/f$ is integrable at infinity\rm. See the end of this section for some more detail and Section \ref{fr} for a concise proof.

\medskip

Let us now give briefly some more precise detail on the conditions on the nonlinearities to be verified in order to prove our results. In the first one, Theorem \ref{teo1}, we shall assume that $f$ is continuous. Besides, we assume that $f\ge h$ where $h$ is increasing and convex in $[0,+\infty)$, that it satisfies $\int^{+\infty}\frac{1}{h(s)}\,ds\,<+\infty$, that $h(0)=0$ and finally that the condition $h'(0)>\lambda_1(M)$ holds. Of course, a sufficient condition for this to hold is that the conditions satisfied by $h$ are satisfied by the nonlinearity $f$ itself, as mentioned above. Then we show that \it all \rm solutions blow up in finite time. In Theorem \ref{teo2} we shall show that, for any locally Lipschitz nonlinearity $f$ such that $f(x)\le \lambda x$ in a neighbourhood of $x=0$, with $\lambda \le\lambda_1(M)$, then \it sufficiently small \rm data give rise to solutions existing for all times. Of course, if $f'(0)$ exists, a sufficient condition for the above condition on $f$ to hold for some $\lambda<\lambda_1(M)$ is $f'(0)<\lambda_1(M)$, as mentioned above. The combination of the two results thus shows the version of the Fujita phenomenon we aim at.

\medskip
It is important to mention that the above results provide, as immediate consequences, new results w.r.t. the ones proved in \cite{BPT} for \eqref{problema} even in the classical case $f(u)=u^p$. In fact, Theorem \ref{teo2} shows in particular that \it solutions corresponding to sufficiently small data are global\rm, on a much wider class of manifolds than the hyperbolic space considered in one of the main result of \cite{BPT}, see Corollary \ref{small} for a precise statement. In particular the results holds e.g. on all those manifolds whose sectional curvatures are pinched between two strictly negative constants everywhere.

\medskip

%


The paper is organized as follows. In Section \ref{prel} we collect some preliminary material on the class of manifolds considered and on the concept of solution.
In Section \ref{statements} we state all our main results.   Section \ref{proofs} contains the proof of Theorem \ref{teo1}, whereas Theorem \ref{teo2} is proved in Section \ref{existence}.  In Section \ref{fr} we complement our main results by considering a large class of manifolds, e.g. those ones in which the radial Ricci curvature does not diverge at infinity faster that $-cr^2$, where $r$ is the Riemannian distance from a given pole $o\in M$. We give a concise proof of the fact that on such manifolds, if $f$ is a locally Lipschitz, increasing, convex function such that $1/f$ is integrable at infinity, large data give rise to solutions blowing up in finite time.

\section{Preliminaries}\label{prel}

\subsection{Heat semigroup on $M$}
Let $\{e^{t\Delta}\}_{t\ge 0}$ be the heat semigroup of $M$, acting on $L^p(M)$ for all $p\in[1,+\infty]$. It admits a (minimal) \textit{heat kernel}, namely a function $p\in C^{\infty}(M\times M\times (0,+\infty))$, $p>0$ in $M\times M\times (0,+\infty)$ such that
\begin{equation*}
(e^{t\Delta} u_0)(x)=\int_M p(x,y,t)\,u_0(y) \, d\mu(y), \quad x\in M,\,\, t>0,
\end{equation*}
for any $u_0\in L^p(M)$. It is well known that
\begin{equation}\label{eq23}
\int_M p(x,y,t) \, d\mu(y)\,\le \,1,\quad \text{for all}\,\,\, x\in M,\,\, t>0.
\end{equation}

As recalled in the Introduction, we say that a manifold $M$ is stochastically complete if the following condition holds:
\begin{equation*}
\int_M p(x,y,t) \, d\mu(y)\,= \,1,\quad \text{for all}\,\,\, x\in M,\,\, t>0.
\end{equation*}
See the considerations and the references after \eqref{stoc} for sufficient conditions for this fact to hold.


Furthermore, it is known that if $M$ is a noncompact Riemannian manifold, then (see \cite[Corollary 1]{CK})
\begin{equation}\label{eq26}
\lim_{t\to+\infty} \frac{\log p(x,y,t)}{t}\,=\, -\lambda_1(M)\quad \text{locally uniformly in}\,\,\, M\times M\,,
\end{equation}
where $\lambda_1(M)$ is the infimum of the $L^2$ spectrum of $-\Delta$.
We also recall that from the Faber-Krahn inequality \eqref{FK} it follows that (see \cite[Cor. 15.17 (b)]{Grig3}), for some $\bar C>0$:
\begin{equation}\label{eq26a}
p(x,y,t)\,\le\, \bar C\,e^{-\lambda_1\,t}\quad \text{for any}\,\,\, x,y\in M, t\geq 1.
\end{equation}


\subsection{On the concept of solution}

We shall always deal with bounded initial data. Solutions will be meant in the classical sense. More precisely, setting $Q_T=M\times(0,T]$, we require that $u\in C^{2,1}(Q_T)\cap C(\overline{Q_T})\cap L^\infty(\overline{Q_T})$ and that \eqref{problema} holds in the classical sense.

We shall use in the sequel two different concepts of solution. On the one hand
a function $u\in C(M\times (0,\tau])\cap L^{\infty}(M\times (0,\tau])$, for every $\tau\in (0,T]$ is called a \textit{mild solution} of problem \eqref{problema} if
\begin{equation}\label{eq27}
u(x,t)=(e^{t\Delta}u_0)(x) + \int_0^t \left(e^{(t-s)\Delta}f(u)\right)(x)\,ds\,,
\end{equation}
for any $t\in [0,\tau]$.


We notice that, by adapting the methods of \cite[Prop. 2.1, Lemma 2.1]{BPT}, for \it bounded \rm initial data $u_0$ and up to a time $T$ such that $u(t)$ is bounded for all  $t\in[0,T)$ (blow-up might occur at some positive time), the two concepts of solutions coincide provided $f$ is locally Lipschitz, as required in our main results. Hence we shall use them indifferently when needed.

\section{Statements of main results}\label{statements}
In this section, we state our results concerning solutions to problem \eqref{problema}.
%
We say that a solution {\em blows up in finite time}, whenever there exists $\tau>0$ such that
\[\lim_{t\to \tau^-}\|u(t)\|_{L^\infty(M)}=+\infty\,.\] Otherwise, if a solution $u(t)\in L^\infty(M)$ for all $t>0$, we say that it is {\em global}. Our first results involves nonexistence of global solutions. Notice that the assumptions on the auxiliary function $h$ below entail that $h$ is differentiable in $x=0$.
\begin{theorem}\label{teo1}
Let $M$ be a complete, non compact, stochastically complete Riemannian manifold with $\lambda_1(M)>0$. Let $u_0\in C(M)\cap L^\infty(M), u_0\ge 0, u_0\not\equiv 0 \text{ in } M$. Let $f$ be locally Lipschitz in $[0, +\infty)$. Assume that $f\ge h$ where $h$ is increasing and convex in $[0,+\infty)$ and $h(0)=0$. Moreover, suppose that
\begin{equation}\label{N5}
\int^{+\infty}\frac{1}{h(s)}\,ds\,<+\infty,
\end{equation}
and finally that $h'(0)>\lambda_1(M)$.
Then any solution to problem \eqref{problema} blows up in finite time.
\end{theorem}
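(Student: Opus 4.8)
The plan is to compare the solution $u$ from below with the solution $\underline u$ of the simpler ODE-driven problem $v_t = \Delta v + h(v)$, since $f \geq h$ and solutions are ordered by the comparison principle (valid on stochastically complete manifolds, or directly via the mild formulation and monotonicity of the heat semigroup on nonnegative data). So it suffices to show that solutions of $v_t = \Delta v + h(v)$ with data $\underline u_0 \geq 0$, $\underline u_0 \not\equiv 0$, and with $h$ convex, increasing, $h(0)=0$, $h'(0) > \lambda_1(M)$, and $\int^{+\infty} ds/h(s) < +\infty$, blow up in finite time.

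First I would exploit the strict positivity of $\lambda_1(M)$ and the long-time behavior of the heat kernel. The key mechanism is that $h(v) \gtrsim h'(0)\,v$ near $v=0$ by convexity — more precisely $h(s) \geq h'(0)\,s$ for all $s \geq 0$, again by convexity together with $h(0)=0$ — so that $\underline u$ is a supersolution of the linear equation $w_t = \Delta w + h'(0)\,w$. Writing $w(t) = e^{h'(0) t} e^{t\Delta} \underline u_0$, one has $\underline u(t) \geq w(t)$ as long as $\underline u$ exists. Now pick a point $x_0$ and a time $t_0$; by \eqref{eq26}, $\log p(x,y,t)/t \to -\lambda_1(M)$ locally uniformly, so for $t$ large, $e^{t\Delta}\underline u_0$ decays no faster than roughly $e^{-(\lambda_1(M)+\varepsilon)t}$ on compact sets — hence, since $h'(0) > \lambda_1(M)$, we may choose $\varepsilon>0$ with $h'(0) - \lambda_1(M) - \varepsilon =: \delta > 0$, and conclude that $\underline u(x_0, t) \geq c\, e^{\delta t}$ for $t$ large, i.e. the linear lower bound already forces the solution to grow without bound pointwise. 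In particular $\|\underline u(t)\|_\infty$ is unbounded, but this alone only gives growth, not finite-time blow-up.

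To upgrade growth to genuine finite-time blow-up I would use the integrability condition \eqref{N5}. The standard device: once we know $\underline u(x_0, t_1) =: A$ is large (as large as we like, by the previous step), compare $\underline u$ with the spatially homogeneous solution of the pure ODE $y' = h(y)$, $y(t_1) = A$, restarting at time $t_1$. Since $h(v) \geq 0$, the constant-in-space solution of the ODE is a subsolution of $v_t = \Delta v + h(v)$ below $\underline u$ at time $t_1$ (on the whole manifold, using that the manifold is stochastically complete so constants solve the linear heat equation and the comparison applies); therefore $\underline u(x_0, t) \geq y(t)$. But $y$ reaches $+\infty$ in time $\int_A^{+\infty} ds/h(s) < +\infty$, which is finite by \eqref{N5}. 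Hence $\underline u$, and a fortiori $u$, blows up in finite time. The one subtlety here is that a genuinely constant-in-space comparison function requires control of the linear flow of constants — this is exactly where stochastic completeness enters, guaranteeing $e^{t\Delta} 1 = 1$, so that $y(t)$ is an honest subsolution on all of $M \times (t_1, \cdot)$.

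The main obstacle I anticipate is making the "restart" step fully rigorous in the noncompact setting: one needs a comparison principle valid for the mild/classical solutions on a stochastically complete manifold, and one must be careful that at the intermediate time $t_1$ the solution $\underline u$ is still bounded (so the classical theory applies) while already being pointwise as large as desired — which is fine because pointwise largeness at a fixed time is compatible with boundedness. A secondary technical point is the locally-uniform nature of the convergence in \eqref{eq26}: it gives the lower bound $\underline u(x_0,t) \geq c e^{\delta t}$ only on compact time-slices for fixed $x_0$, but that is all we need. Care must also be taken that $u_0 \not\equiv 0$ propagates to strict positivity of $e^{t\Delta} u_0$ for $t>0$ (by positivity of the heat kernel), so that the constant $c$ above is genuinely positive.
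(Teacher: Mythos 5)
Your first step (the lower bound $\underline u(x_0,t)\ge C_1 e^{[h'(0)-\lambda_1(M)-\varepsilon]t}$ obtained from $h(s)\ge h'(0)s$ and the heat-kernel asymptotics \eqref{eq26}) is sound and parallels the paper's Lemma \ref{lemma1} together with the linearization step inside Lemma \ref{lemma2}. The gap is in your closing ``restart'' step. You compare $\underline u$ from above a \emph{spatially constant} function $y(t)$ with $y(t_1)=A$, but this requires $\underline u(x,t_1)\ge A$ for \emph{every} $x\in M$, whereas your growth estimate only gives largeness at the point $x_0$ (or on a fixed compact set). On a noncompact manifold the solution at time $t_1$ is bounded and in general decays at spatial infinity (indeed $\inf_M \underline u(\cdot,t_1)$ is typically $0$ when $u_0\in L^\infty$), so no positive constant lies below $\underline u(\cdot,t_1)$ on all of $M$; stochastic completeness makes constants solutions of the heat equation but does not repair the ordering of the initial data. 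Pointwise largeness at one point, or on a compact set, is simply not enough to launch a spatially homogeneous ODE subsolution.

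Two remarks on how to close the gap. A repair in the spirit of your argument is to localize: fix a precompact $\Omega$ with $\int_\Omega u_0\,d\mu>0$, note that your exponential lower bound holds uniformly on $\Omega$, and run Kaplan's method there with the first Dirichlet eigenfunction $\phi_\Omega$ (normalized so that $\int_\Omega\phi_\Omega\,d\mu=1$): the functional $J(t)=\int_\Omega u\,\phi_\Omega\,d\mu$ satisfies $J'\ge -\lambda_1(\Omega)J+h(J)$, and since \eqref{N5} together with convexity forces $h(s)/s\to+\infty$, once $J(t_1)$ is large one gets $J'\ge \tfrac12 h(J)$ and finite-time blow-up of $J$, hence of $\|u(t)\|_{L^\infty}$. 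The paper proceeds quite differently and avoids any eigenfunction: it sets $\Phi_x^T(t)=\int_M p(x,\cdot,T-t)\,u(\cdot,t)\,d\mu$, uses stochastic completeness to apply Jensen's inequality with the probability density $p(x,\cdot,T-t)$ and obtain the clean ODE inequality $(\Phi_x^T)'\ge h(\Phi_x^T)$ on $(0,T)$, and converts the finite blow-up time of this ODE into the upper bound $(e^{T\Delta}u_0)(x)\le C e^{-h'(0)T}$, which contradicts the lower bound of Lemma \ref{lemma1} when $h'(0)>\lambda_1(M)$. As written, your proof does not establish the theorem.
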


Notice that in the above Theorem the fact that $h$ is assumed to be increasing and convex implies the existence of $h'(0)$.

\begin{theorem}\label{teo2}
Let $M$ be a complete, non compact, stochastically complete Riemannian manifold with $\lambda_1(M)>0$ and such that the Faber-Krahn inequality \eqref{FK} holds. Assume also that $f$ is increasing, locally Lipschitz and $f(0)=0$. Moreover, suppose that, for some $\delta>0$ and $0<\alpha\leq \lambda_1(M)$,
\begin{equation}\label{N1}
f(x)\le \alpha x\quad \text{ for all }\,\, x\in [0, \delta]\,.
\end{equation}

Furthermore, assume that $u_0\in C(M)\cap L^\infty(M)\cap L^1(M), u_0\ge 0 \text{ in } M$ is small enough.
Then there exists a global solution to problem \eqref{problema}; in addition, $u\in L^{\infty}(M\times(0,+\infty))$.
\end{theorem}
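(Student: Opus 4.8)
The plan is to construct a global supersolution via the mild formulation and a fixed-point/iteration argument, using the decay estimate \eqref{eq26a} coming from the Faber-Krahn inequality. First I would reduce to the model nonlinearity: since $f$ is increasing and $f(x)\le \alpha x$ for $x\in[0,\delta]$ with $\alpha\le\lambda_1(M)$, as long as the solution stays below the threshold $\delta$ it satisfies $u_t\le \Delta u+\alpha u$, so the natural candidate comparison object solves the linear equation $w_t=\Delta w+\alpha w$, i.e. $w(t)=e^{\alpha t}e^{t\Delta}u_0$. The point of the hypothesis $\alpha\le\lambda_1(M)$ is that this $w$ does \emph{not} grow: using \eqref{eq23} for $0\le t\le 1$ and the ultracontractive-type bound \eqref{eq26a} for $t\ge 1$, one gets $\|e^{t\Delta}u_0\|_{L^\infty(M)}\le \min\{\|u_0\|_{L^\infty(M)},\ \bar C e^{-\lambda_1 t}\|u_0\|_{L^1(M)}\}$, hence $\|w(t)\|_{L^\infty(M)}\le \|u_0\|_\infty$ on $[0,1]$ and $\|w(t)\|_\infty\le \bar C e^{(\alpha-\lambda_1)t}\|u_0\|_1\le \bar C\|u_0\|_1$ for $t\ge1$. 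So if $u_0$ is small in both $L^1$ and $L^\infty$, then $\sup_{t\ge 0}\|w(t)\|_\infty\le \delta$.

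Next I would set up the contraction. Fix $R\le\delta$ and let
\[
X_R=\Big\{u\in C(M\times(0,\infty))\cap L^\infty(M\times(0,\infty)):\ 0\le u,\ \sup_{t> 0}\|u(t)\|_{L^\infty(M)}\le R\Big\},
\]
and define $\Phi(u)(t)=e^{t\Delta}u_0+\int_0^t e^{(t-s)\Delta}f(u(s))\,ds$. For $u\in X_R$ with $R\le\delta$ we have $0\le f(u)\le\alpha u$, so by positivity of the heat kernel and \eqref{eq23},
\[
0\le \Phi(u)(t)\le e^{t\Delta}u_0+\alpha\int_0^t e^{(t-s)\Delta}u(s)\,ds .
\]
Estimating $\|e^{t\Delta}u_0\|_\infty$ as above and $\|e^{(t-s)\Delta}u(s)\|_\infty\le \min\{R,\ \bar C e^{-\lambda_1(t-s)}\|u(s)\|_{L^1(M)}\}$ — here I also need an $L^1$ bound propagating along the iteration, which follows because $e^{t\Delta}$ is a contraction on $L^1$ and $\int_0^t\|e^{(t-s)\Delta}f(u(s))\|_1\,ds\le\alpha\int_0^t\|u(s)\|_1\,ds$, so a Gronwall argument controls $\|u(t)\|_1\le e^{\alpha t}\|u_0\|_1$; combined with \eqref{eq26a} the bad exponential is killed since $\alpha\le\lambda_1$ — one obtains $\|\Phi(u)(t)\|_\infty\le C_1\|u_0\|_\infty+C_2\|u_0\|_1\le R$ once $\|u_0\|_\infty,\|u_0\|_1$ are small enough. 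Thus $\Phi$ maps $X_R$ into itself. For the contraction property one uses that $f$ is locally Lipschitz: on $[0,R]$ it has Lipschitz constant $L_R$, so $\|\Phi(u)(t)-\Phi(v)(t)\|_\infty\le L_R\int_0^t\|e^{(t-s)\Delta}(u(s)-v(s))\|_\infty\,ds$, and splitting the integral at $t-1$ and using \eqref{eq23} on the near part and \eqref{eq26a} (in its $L^1$-to-$L^\infty$ form, which however needs the difference in $L^1$ — alternatively keep the whole argument in the joint norm $\|u(t)\|_1+\|u(t)\|_\infty$) on the far part gives a contraction on a small enough time interval, and then a continuation/bootstrap over $[0,\infty)$ using the uniform-in-time a priori bound. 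Banach's fixed point theorem then yields a mild solution $u\in X_R$ defined for all $t>0$, with $u\in L^\infty(M\times(0,\infty))$; by the equivalence of mild and classical solutions recalled in Section \ref{prel} (valid since $f$ is locally Lipschitz and $u$ stays bounded), $u$ is the desired global classical solution.

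The main obstacle I anticipate is bookkeeping the two norms simultaneously: the self-mapping estimate needs the $L^\infty\cap L^1$ control of $u_0$ and the propagation of an $L^1$ bound, while the contraction estimate wants a Lipschitz bound that is naturally in $L^\infty$ but for which the long-time smoothing \eqref{eq26a} only helps if one also tracks the $L^1$ norm of the difference $u-v$. The clean way is to work throughout in the Banach space with norm $\sup_{t>0}\big(\|u(t)\|_{L^1(M)}+\|u(t)\|_{L^\infty(M)}\big)$, so that \eqref{eq23}, the $L^1$-contractivity of $e^{t\Delta}$, and \eqref{eq26a} all apply uniformly; the condition $\alpha\le\lambda_1(M)$ is exactly what makes the time integrals $\int_0^t e^{-\lambda_1(t-s)}\,ds$ and the Duhamel term bounded uniformly in $t$, which is the crux of globality. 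The smallness of $u_0$ is used only to ensure the solution never exits the interval $[0,\delta]$ where the linear bound $f(x)\le\alpha x$ is available.
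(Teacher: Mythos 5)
Your first paragraph constructs exactly the barrier the paper uses: $\bar u(x,t)=e^{\alpha t}(e^{t\Delta}u_0)(x)$, kept below $\delta$ for all time by combining \eqref{eq23} on $(0,1]$ with \eqref{eq26a} for $t>1$ and $\alpha\le\lambda_1(M)$, under smallness of $u_0$ in $L^1\cap L^\infty$ (the paper's \eqref{eq12a}--\eqref{eq13a}). The paper then checks that $\bar u$ is a supersolution of \eqref{problema} (because $f(\bar u)\le\alpha\bar u$ once $\bar u\le\delta$), solves Dirichlet problems on an exhaustion $\Omega_j\uparrow M$, traps $0\le u_j\le\bar u$ by comparison, and passes to the limit by parabolic estimates. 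Your replacement of that last construction by a fixed-point argument contains a genuine gap: the self-mapping estimate does not close. For a general $u$ in your ball $X_R$ the only $L^1$ information that propagates is $\|\Phi(u)(t)\|_{L^1}\le\|u_0\|_{L^1}+\alpha\int_0^t\|u(s)\|_{L^1}\,ds$, so a \emph{uniform-in-time} $L^1$ bound is not preserved by $\Phi$ and the Gronwall bound $\|u(s)\|_{L^1}\le e^{\alpha s}\|u_0\|_{L^1}$ is all you have. Feeding it into \eqref{eq26a} inside the Duhamel term gives
\[
\alpha\bar C\,\|u_0\|_{L^1}\int_0^{t-1}e^{-\lambda_1(t-s)}e^{\alpha s}\,ds\;\le\;\frac{\alpha\bar C}{\lambda_1+\alpha}\,\|u_0\|_{L^1}\,e^{\alpha t-(\lambda_1+\alpha)},
\]
and the left-hand side really is of order $e^{\alpha t}$ (take $s\in[t-2,t-1]$), hence unbounded. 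The hypothesis $\alpha\le\lambda_1$ kills $e^{(\alpha-\lambda_1)t}$, as in the bound for $e^{\alpha t}v(t)$ itself, but it does not kill the exponential produced by integrating $e^{-\lambda_1(t-s)}e^{\alpha s}$ in $s$. Consequently neither $X_R$ nor a ball in your joint norm $\sup_{t>0}\bigl(\|u(t)\|_{L^1}+\|u(t)\|_{L^\infty}\bigr)$ is mapped into itself, and the ``continuation over $[0,\infty)$ using the uniform a priori bound'' has no a priori bound to invoke.

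The repair is to make the comparison pointwise rather than in norm, i.e.\ to use the barrier you already built. Since $e^{(t-s)\Delta}\bigl(e^{\alpha s}e^{s\Delta}u_0\bigr)=e^{\alpha s}e^{t\Delta}u_0$, the function $w(t)=e^{\alpha t}e^{t\Delta}u_0$ satisfies the integral identity $w(t)=e^{t\Delta}u_0+\alpha\int_0^te^{(t-s)\Delta}w(s)\,ds$. Hence the monotone iteration $u_1=e^{t\Delta}u_0$, $u_{n+1}=\Phi(u_n)$ satisfies $0\le u_n\le w\le\delta$ for every $n$ by induction (positivity of the heat kernel, monotonicity of $f$, and $f(x)\le\alpha x$ on $[0,\delta]$), is nondecreasing in $n$, and converges to a mild solution bounded by $\delta$; equivalently, restrict your fixed-point set to $\{0\le u\le w\}$, which $\Phi$ does preserve. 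This is the mild-formulation shadow of the paper's supersolution-plus-exhaustion argument, and once this correction is made the two proofs are essentially the same.
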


The smallness condition on $u_0$ in Theorem \ref{teo2} can be precisely formulated. Indeed, our hypothesis is that

\begin{equation}\label{eq12a}
\|u_0\|_{L^{\infty}(M)}\,\le\,\delta\,e^{-\alpha},
\end{equation}
and
\begin{equation}\label{eq13a}
\|u_0\|_{L^{1}(M)}\,\le\,\frac{\delta}{C_2},
\end{equation}
where $C_2=\bar C\,e^{-(\lambda_1-\alpha)}$, $\bar C$ being defined in \eqref{eq26a}, while $\alpha$ and $\delta$ are given by \eqref{N1}.

As a consequence, we can generalize one of the main results of \cite{BPT} (see also \cite{Pu3}) to a class of manifolds much wider than $\mathbb{H}^n$.

\begin{corollary}\label{small}
             Let $M$ be a complete, non compact, stochastically complete Riemannian manifold with $\lambda_1(M)>0$ and such that the Faber-Krahn inequality \eqref{FK} holds. Assume $f(x)=x^p$ for all $x\ge0$ with $p>1$. Assume also that $u_0\in C(M)\cap L^\infty(M)\cap L^1(M), u_0\ge 0 \text{ in } M$ is small enough.
Then there exists a global solution to problem \eqref{problema}; in addition, $u\in L^{\infty}(M\times(0,+\infty))$.
\end{corollary}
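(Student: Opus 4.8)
The plan is simply to verify that the power nonlinearity $f(x)=x^p$, $p>1$, meets every hypothesis of Theorem \ref{teo2}, after which the conclusion is immediate. The structural conditions are clear: $f$ is increasing on $[0,+\infty)$, it is $C^1$ there (with $f'(x)=px^{p-1}$ continuous), hence locally Lipschitz on every compact subinterval, and $f(0)=0$.

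The only point that needs a short argument is condition \eqref{N1}, namely the existence of $\delta>0$ and $0<\alpha\le\lambda_1(M)$ with $f(x)\le\alpha x$ on $[0,\delta]$. This is exactly where the restriction $p>1$ enters: since $p>1$ we have $f(x)/x=x^{p-1}\to 0$ as $x\to0^+$. Concretely, one fixes any $\alpha\in(0,\lambda_1(M)]$ (for instance $\alpha=\lambda_1(M)$) and sets $\delta:=\alpha^{1/(p-1)}$; then for every $x\in[0,\delta]$ one has $f(x)=x^{p-1}\,x\le\delta^{p-1}\,x=\alpha x$, which is precisely \eqref{N1}. Note that choosing $\alpha$ no larger than $\lambda_1(M)$ is essential so that the hypothesis $0<\alpha\le\lambda_1(M)$ of Theorem \ref{teo2} is respected.

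With these choices of $\alpha$ and $\delta$, Theorem \ref{teo2} applies directly: for $u_0\in C(M)\cap L^\infty(M)\cap L^1(M)$, $u_0\ge0$, satisfying the explicit smallness bounds \eqref{eq12a} and \eqref{eq13a} with the present values of $\alpha$, $\delta$ and with $C_2=\bar C\,e^{-(\lambda_1-\alpha)}$, there exists a global solution of \eqref{problema}, which moreover lies in $L^\infty(M\times(0,+\infty))$. This establishes the corollary. I do not expect any genuine obstacle here: the entire analytic content is contained in Theorem \ref{teo2}, and the corollary is only a matter of observing that the sublinearity-at-the-origin condition \eqref{N1} is automatic for $f(u)=u^p$ precisely because $p>1$.
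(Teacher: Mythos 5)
Your proposal is correct and matches the paper's (implicit) derivation: the corollary is stated as an immediate consequence of Theorem \ref{teo2}, the only point to check being that $f(x)=x^p$ with $p>1$ satisfies \eqref{N1} near the origin, which your choice $\alpha=\lambda_1(M)$, $\delta=\alpha^{1/(p-1)}$ verifies exactly. Nothing further is needed.
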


\begin{remark}

\begin{itemize}
\item \rm For any $p>1$, let
\[f(u)=\begin{cases}
\alpha u, & u\in [0, 1],\\
\alpha u^p & u\in (1, +\infty).
\end{cases}\]
If $\alpha>\lambda_1(M)$, then by Theorem \ref{teo1}, the solution to problem \eqref{problema} blows up in finite time for any nontrivial $u_0$. On the other hand, if $\alpha\leq \lambda_1(M)$, then the solution exists globally in time, provided that $u_0$ is sufficiently small.

\item Let $f(u)=e^{\beta u}-1$ with $\beta>0$. By Theorem \ref{teo1}, if $\beta>\lambda_1(M)$, then the solution to problem \eqref{problema} blows up in finite time. On the contrary, if $\beta<\lambda_1(M)$, then condition \eqref{N1} is satisfied with $\beta<\alpha\leq \lambda_1(M)$. Therefore, by Theorem \ref{teo2}, the solution exists globally in time, whenever $u_0$ is sufficiently small.

    \end{itemize}
\end{remark}

By standard methods, on suitable manifolds and for a wide class of nonlinearities $f$, it is possible to show that whenever $u_0$ is large enough, blow-up of solutions occurs. We defer the discussion of this fact to Section \ref{fr}.

\section{Finite time blow-up for any initial datum}\label{proofs}

\subsection{Two key estimates}

Let us first prove a preliminary lemma.

\begin{lemma}\label{lemma1}
Let $M$ be a complete, non compact Riemannian manifold with $\lambda_1(M)>0$. Let $u_0\in C(M)\cap L^\infty(M), u_0\ge 0, u_0\not\equiv 0 \text{ in } M$.
Let $\varepsilon \in (0,\lambda_1(M))$. Then there exist $\Omega\subset M$, $t_0>0$, $C_1>0$ such that
\begin{equation}\label{eq31}
(e^{t\Delta}u_0)(x) \ge \, C_1 e^{-[\lambda_1(M)+\varepsilon]t}\,, \quad \text{for any}\,\,\, x\in\Omega,\,\,\,t>t_0\,.
\end{equation}
\end{lemma}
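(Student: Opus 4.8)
The plan is to bound $(e^{t\Delta}u_0)(x)$ from below by the heat kernel integrated over a small ball on which $u_0$ is bounded away from zero, and then to feed in the sharp large-time heat kernel asymptotics \eqref{eq26} on a fixed compact set.

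First I would use that $u_0$ is continuous, nonnegative and not identically zero to fix $y_0\in M$ together with $r>0$, $c_0>0$ such that $u_0\ge c_0$ on the geodesic ball $B:=B(y_0,r)$; by completeness of $M$ and Hopf--Rinow, $\overline B$ is compact. Discarding the nonnegative contribution of $M\setminus B$ in the representation of the heat semigroup,
\begin{equation*}
(e^{t\Delta}u_0)(x)=\int_M p(x,y,t)\,u_0(y)\,d\mu(y)\;\ge\;c_0\int_{B}p(x,y,t)\,d\mu(y).
\end{equation*}
Then I would take $\Omega=B$ (any fixed relatively compact open subset of $M$ would do), so that $\overline\Omega\times\overline B$ is compact. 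By \eqref{eq26} the convergence $\tfrac{\log p(x,y,t)}{t}\to-\lambda_1(M)$ is uniform on this compact set, hence for the given $\varepsilon\in(0,\lambda_1(M))$ there is $t_0>0$ with $\log p(x,y,t)\ge -[\lambda_1(M)+\varepsilon]\,t$, i.e. $p(x,y,t)\ge e^{-[\lambda_1(M)+\varepsilon]t}$, for all $(x,y)\in\overline\Omega\times\overline B$ and $t>t_0$. Substituting this pointwise bound into the displayed inequality gives
\begin{equation*}
(e^{t\Delta}u_0)(x)\;\ge\;c_0\,\mu(B)\,e^{-[\lambda_1(M)+\varepsilon]t},\qquad x\in\Omega,\ t>t_0,
\end{equation*}
so the lemma follows with $C_1=c_0\,\mu(B)>0$.

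The argument is elementary once \eqref{eq26} is granted; the only point that needs care is the order of quantifiers --- one must fix the compact set $\overline\Omega\times\overline B$ first and only then extract $t_0$ from the \emph{locally} uniform convergence in \eqref{eq26} --- together with the use of completeness to ensure that relatively compact geodesic balls are available. The substantive ingredient, \eqref{eq26}, is quoted from \cite{CK}, so I do not expect any genuine obstacle.
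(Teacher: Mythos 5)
Your proof is correct and follows essentially the same route as the paper's: restrict the heat semigroup representation to a fixed relatively compact set where $\int u_0\,d\mu>0$, apply the locally uniform large-time asymptotics \eqref{eq26} to bound the kernel below by $e^{-[\lambda_1(M)+\varepsilon]t}$ for $t>t_0$, and conclude with $C_1$ a positive constant depending on $u_0$ and $\Omega$. Your extra care in choosing $\Omega$ relatively compact (so that the \emph{locally} uniform convergence actually yields a single $t_0$) is a minor but welcome refinement of the paper's argument, which only states $\mu(\Omega)<+\infty$.
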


\begin{proof}
Let $\Omega \subset M$ be such that $\mu(\Omega)<+\infty$, $\int_{\Omega} u_0\, d\mu>0$. From \eqref{eq26}, there exists $t_0>0$ such that, for every $x,y\in\Omega$,
$$
p(x,y,t)\ge e^{-[\lambda_1(M)+\varepsilon]t} \quad \text{for every}\, t>t_0\,.
$$
Hence
$$
\begin{aligned}
(e^{t\Delta}u_0)(x) &\ge \int_M p(x,y,t) u_0(y)\, d\mu(y) \\
&\ge e^{-[\lambda_1(M)+\varepsilon]t}\, \int_{\Omega}u_0(y) \,d\mu(y).
\end{aligned}
$$
Consequently, we obtain \eqref{eq31} with $C_1:= \int_{\Omega}u_0(y) \,d\mu(y)>0\,.$
\end{proof}

Let $u$ be a mild solution of equation \eqref{problema}, so that it fulfills \eqref{eq27}. Then, for any $x\in M$ and for any $T>0$, we define
\begin{equation}\label{eq32}
\Phi_x^T(t)\equiv \Phi_x(t):=\int_M p(x,z,T-t)\, u(z,t)\, d\mu(z)\,\quad \text{ for any }\,\, t\in [0, T]\,.
\end{equation}
Observe that
\begin{equation}\label{eq33}
\Phi_x(0)=\int_M p(x,z,T) \,u_0(z)\, d\mu(z)\,= (e^{T\Delta}u_0)(x), \,\, x\in M\,.
\end{equation}
Suppose that $u_0\in L^{\infty}(M)$. Choose any
\begin{equation}\label{eq34a}
\delta>\|u_0\|_{L^{\infty}(M)}.
\end{equation}
From \eqref{eq34a} and \eqref{eq23} we obtain that, for any $x\in M$,
\begin{equation}\label{eq34b}
\begin{aligned}
\Phi_x(0)&=\int_M p(x,z,T) \,u_0(z)\, d\mu(z)\\
&\le\,\|u_0\|_{L^{\infty}(M)}\int_M p(x,z,T)\, d\mu(z)\\
&< \delta.
\end{aligned}
\end{equation}

We now state the following lemma.

\begin{lemma}\label{lemma2}
Let $M, f, h, u_0$ be as in Theorem \ref{teo1}. Let $x\in M$ and $\Phi_x(t)$ be as in \eqref{eq32}. Set $\alpha:=h'(0).$ Then
\begin{equation}\label{eq34}
\Phi_x(0)\,\,\le\,\, C\,e^{-\alpha T}, \quad \text{for any}\,\,T\ge \bar t,
\end{equation}
for suitable $\bar t>0$ and $C>0$, depending on $x$.
\end{lemma}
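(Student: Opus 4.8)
The plan is to convert the heat-kernel average $\Phi_x(t)$ into a quantity obeying an autonomous first-order differential inequality, exploit that $1/h$ is integrable at infinity to bound $\int_{\Phi_x(0)}^{+\infty}\frac{ds}{h(s)}$ from below by $T$, and then extract the exponential decay of $\Phi_x(0)=(e^{T\Delta}u_0)(x)$ from the behaviour of $1/h$ near the origin; the hypothesis $h'(0)=\alpha$ enters precisely at this last step.

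First I would note $\Phi_x(0)=(e^{T\Delta}u_0)(x)=\int_M p(x,z,T)\,u_0(z)\,d\mu(z)>0$, since $p>0$ and $u_0\ge0$, $u_0\not\equiv0$. Then I would derive, for $t\in[0,T)$,
\[
\Phi_x(t)=\Phi_x(0)+\int_0^t\!\int_M p(x,z,T-s)\,f(u(z,s))\,d\mu(z)\,ds,
\]
by applying $e^{(T-t)\Delta}$ to the mild formulation \eqref{eq27} at time $t$ and using the semigroup identities $e^{(T-t)\Delta}e^{t\Delta}=e^{T\Delta}$ and $e^{(T-t)\Delta}e^{(t-s)\Delta}=e^{(T-s)\Delta}$, the interchange of $e^{(T-t)\Delta}$ with the time integral being legitimate because $u$, hence $f(u)$, is bounded on $[0,T]$. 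Equivalently one could differentiate \eqref{eq32} directly, using that $p(x,\cdot,\cdot)$ solves the heat equation and that $\Delta$ is symmetric — but then one must control integrability at infinity on the noncompact $M$, so the semigroup route looks cleaner. In particular $\Phi_x\in C^1([0,T))$, it is nondecreasing (as $f\ge0$ and $u\ge0$), and $\Phi_x'(t)=\int_M p(x,z,T-t)\,f(u(z,t))\,d\mu(z)$.

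The key step is to bound this below by $h(\Phi_x(t))$, and here stochastic completeness is essential: since $\int_M p(x,z,T-t)\,d\mu(z)=1$, the measure $d\nu_t=p(x,z,T-t)\,d\mu(z)$ is a probability measure, so from $f\ge h$ and Jensen's inequality for the convex $h$,
\[
\Phi_x'(t)\ \ge\ \int_M h(u(z,t))\,d\nu_t(z)\ \ge\ h\!\left(\int_M u(z,t)\,d\nu_t(z)\right)=h(\Phi_x(t)),\qquad t\in[0,T).
\]
Since $\Phi_x(0)>0$ and $h>0$ on $(0,+\infty)$, separating variables and integrating over $[0,\sigma]$ with $\sigma<T$ gives $\int_{\Phi_x(0)}^{\Phi_x(\sigma)}\frac{ds}{h(s)}\ge\sigma$, and letting $\sigma\uparrow T$, $\int_{\Phi_x(0)}^{+\infty}\frac{ds}{h(s)}\ge T$. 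Finally I would set $K:=\int_1^{+\infty}\frac{ds}{h(s)}\in(0,+\infty)$ (finite by \eqref{N5}) and $\bar t:=K+1$: for $T\ge\bar t$ the previous inequality forces $\Phi_x(0)<1$, and then, using the consequence $h(s)\ge h'(0)\,s=\alpha s$ of convexity together with $h(0)=0$,
\[
T\ \le\ \int_{\Phi_x(0)}^{1}\frac{ds}{\alpha s}+K\ =\ \frac1\alpha\log\frac1{\Phi_x(0)}+K ,
\]
which rearranges to $\Phi_x(0)\le e^{\alpha K}e^{-\alpha T}$, i.e. \eqref{eq34} with $C=e^{\alpha K}$ (in fact independent of $x$).

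I expect the only genuine obstacle to be the first technical point: the rigorous derivation of the identity for $\Phi_x(t)$ (equivalently, for $\Phi_x'$) — justifying the semigroup/Fubini manipulation (or the integration by parts on the noncompact manifold, if one differentiates \eqref{eq32}), and in particular using that $u$ is a bona fide bounded solution on all of $[0,T]$ rather than just up to a possible blow-up time. The remaining ingredients — the Jensen step, whose content is just that stochastic completeness makes $\nu_t$ a probability measure, and the elementary ODE comparison combined with the near-origin bound $1/h(s)\le 1/(\alpha s)$ — are routine.
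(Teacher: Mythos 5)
Your proposal is correct and follows essentially the same route as the paper: the identity $\Phi_x(t)=\Phi_x(0)+\int_0^t\int_M p(x,y,T-s)f(u)\,d\mu\,ds$ from the mild formulation, the key differential inequality $\Phi_x'\ge h(\Phi_x)$ obtained from $f\ge h$ and Jensen's inequality (using stochastic completeness to make $p(x,\cdot,T-t)\,d\mu$ a probability measure), and then exponential decay of $\Phi_x(0)$ extracted from $h(s)\ge\alpha s$ near the origin together with the integrability of $1/h$ at infinity. Your endgame — integrating $\Phi_x'/h(\Phi_x)\ge1$ once over $[0,T)$ and splitting $\int_{\Phi_x(0)}^{+\infty}ds/h(s)$ at $s=1$ — is a tidier packaging of the paper's two-phase argument (linear comparison up to the time $\bar t$ at which $\Phi_x$ reaches $\delta$, then the auxiliary function $G(t)=\int_{\Phi_x(t)}^{+\infty}dz/h(z)$), and has the minor advantage of yielding $\bar t$ and $C$ independent of $x$.
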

Note that $\bar t$ and $C$ are given by \eqref{eq312a} and \eqref{eqf2} below, respectively, with $\delta$ as in \eqref{eq34a}.

\begin{proof}
Let $u$ be a solution to problem \eqref{problema}. So \eqref{eq27} holds; hence
\begin{equation}\label{eq30f}
u(z,t)=\int_M p(z,y,t) u_0(y)\,d\mu(y) + \int_0^t\int_{M} p(z,y,t-s)f(u)\,d\mu(y)\, ds.
\end{equation}
In the definition of $\Phi_x^T(t)\equiv \Phi_x(t)$ (see \eqref{eq32}) fix any
\begin{equation}\label{eq20f}
T>\frac1{\alpha}\left[\log\delta -\log\Phi_x(0)\right]\,.
\end{equation}
We multiply \eqref{eq30f} by $p(x,z,T-t)$ and  integrate over $M$. Therefore, we get
\begin{equation}\label{eq35}
\begin{aligned}
\int_M p(x,z,T-t)\,u(z,t)\,d\mu(z) &=\int_M\int_M p(z,y,t)\,u_0(y)\,p(x,z,T-t)\,d\mu(z)\,d\mu(y)\\
&+ \int_0^t\int_{M}\int_M p(z,y,t-s)\,f(u)\,p(x,z,T-t)\,d\mu(z)d\mu(y)ds.
\end{aligned}
\end{equation}
Now, due to \eqref{eq32}, for all $t\in (0, T),$ equality \eqref{eq35} reads
$$
\begin{aligned}
\Phi_x(t)&=\int_M\int_M p(z,y,t)\,u_0(y)\,p(x,z,T-t)\,d\mu(z)\,d\mu(y) \\
&+ \int_0^t\int_{M}\int_M p(z,y,t-s)\,f(u)\,p(x,z,T-t)\,d\mu(z)\,d\mu(y)\,ds.
\end{aligned}
$$
By \eqref{eq33}, for all $t\in (0, T),$
\begin{equation*}
\begin{aligned}
\Phi_x(t)&= \int_M p(x,y,T)\,u_0(y)\,d\mu(y)+\int_0^t\int_{M} f(u)\,p(x,y,T-s)\,d\mu(y)\,ds \\
&= \Phi_x(0)+\int_0^t\int_{M} f(u)\,p(x,y,T-s)\,d\mu(y)\,ds\,.
\end{aligned}
\end{equation*}
Since $f\geq h$ in $[0, +\infty$),
\begin{equation}\label{eq35b}
\begin{aligned}
\Phi_x'(t)&= \int_{M} f(u)\,p(x,y,T-t)\,d\mu(y) \\
&\ge \int_{M} h(u)\,p(x,y,T-t)\,d\mu(y)\,.
\end{aligned}
\end{equation}

Since $h$ is an increasing convex function, due to \eqref{stoc}, by using Jensen inequality, we get
\begin{equation}\label{eq37}
\int_M p(x,y,T-t)\,h\left(u(y,t)\right)\,d\mu(y)\geq h\left(\int_M p(x,y,T-t)\,u(y,t)\,d\mu(y)\right)=h(\Phi_x(t))\,.
\end{equation}

%

Combining together \eqref{eq35b} and \eqref{eq37}, we obtain
\begin{equation}\label{eq311}
\Phi_x'(t)\,\ge \, h(\Phi_x(t))\quad \text{for all }\,\, t\in (0, T)\,.
\end{equation}

Fix any $x\in M$. We first observe that \eqref{eq311} implies that $\Phi_x(t)$ is an increasing function w.r.t the time variable $t$, since
\begin{equation}\label{eq311c}
\Phi_x'(t)\,>\,0\quad \text{for any}\,\,t\in (0, T).
\end{equation}
Moreover, due to  \eqref{eq34a} and  \eqref{eq34b}, by continuity of $t\mapsto \Phi_x(t)$, we can infer that there exists $t_1>0$ such that
\begin{equation}\label{eqf1}
\Phi_x(t)<\delta \quad \text{for all }\,\, t\in (0, t_1)\,.
\end{equation}

Since $h$ is convex, increasing in $[0, +\infty), h(0)=0, h'(0)=\alpha$, then
\begin{equation}\label{N3}
h(s)\geq \alpha s\quad \text{ for all }\,\, s\geq 0\,.
\end{equation}
Due to \eqref{N3} and to \eqref{eqf1}, we get
\begin{equation*}
\begin{cases}
&\Phi_x'(t)\,\ge\, \alpha\, \Phi_x(t) \quad \text{for any}\,\,t\in(0,t_1), \\
&\Phi_x(0)\,<\,\delta.
\end{cases}
\end{equation*}
Let
\[\bar t:=\sup\{t>0\,:\, \Phi_x(t)<\delta\}\,.\]
We claim that
\begin{equation}\label{eq320}
0<\bar t\,\le \, -\frac{1}{\alpha}\log(\Phi_x(0)) +\frac{\log \delta}{\alpha}\,.
\end{equation}

In order to show \eqref{eq320}, consider the Cauchy problem
\begin{equation*}
\begin{cases}
y'(t) = \alpha y(t),& t>0\\
y(0)=\Phi_x(0)\,.
\end{cases}
\end{equation*}
Clearly,
\[y(t)=\Phi_x(0) e^{\alpha t}, \quad t>0\,.\]
Hence
\begin{equation*}
y(\tau)=\delta \quad \text{whenever }\,\, \tau=\frac 1{\alpha}\left[\log(\delta)-\log(\Phi_x(0))\right]\,.
\end{equation*}
Furthermore, note that, in view of \eqref{eq34b} and \eqref{eq20f},
\begin{equation}\label{eq31f}
0<\tau <T\,.
\end{equation}
By comparison,
\[\Phi_x(t)\geq y(t)\quad \text{for all }\, t\in (0, T)\,.\]
Thus, we can infer that there exists $\bar t\in (0, \tau\,]$ such that
\begin{equation}\label{eq312a}
\Phi_x(\bar t)=\delta.
\end{equation}
In particular, from \eqref{eq31f} it follows that $\bar t<T\,.$
\smallskip

Due to \eqref{eq311c} and \eqref{eq312a}, we obtain that
\begin{equation}\label{eq312}
\Phi_x(t)\,>\,\delta\quad \text{for any}\,\,\, \bar t<t <T.
\end{equation}
By \eqref{eq311}, in particular we have
\begin{equation}\label{eq314}
\Phi_x'(t)\,\ge \,h(\Phi_x(t))\quad \text{for any}\,\,\,\bar{t}<t<T.
\end{equation}

Define
\begin{equation*}
G(t):=\int_{\Phi_x(t)}^{+\infty} \frac{1}{h(z)}\,dz\,\quad \text{for all }\,\, \bar t<t <T \,.
\end{equation*}
Note that $G$ is well-defined thanks to hypothesis \eqref{N5} and to \eqref{eq312}. Furthermore,
\begin{equation}\label{eq39}
G'(t)=-\frac{\Phi_x'(t)}{h(\Phi_x(t))}\,\quad \text{for any }\,\, \bar t<t<T\,.
\end{equation}
We now define
\begin{equation}\label{eq315}
w(t):=\exp\{G(t)\}\quad \text{for any}\,\,\,\bar{t}<t<T.
\end{equation}
Then, due to \eqref{eq39} and \eqref{eq314},
\begin{equation}\label{eq39b}
\begin{aligned}
w'(t)&=-\frac{\Phi_x'(t)}{h(\Phi_x(t))}w(t)\\
&\le - \,\,w(t)\,\quad\quad\quad \text{for any}\,\,\,\bar{t}<t<T.
\end{aligned}
\end{equation}
By integrating \eqref{eq39b} we get:
\begin{equation}\label{eq316}
\begin{aligned}
w(t)&\le w(\bar t\,)\exp\left\{-\int_{\bar t}^t\,ds\right\}\\
&\le w(\bar t\,)\exp\left\{-\,(t-\bar t)\right\} \quad \text{for any}\,\,\,\bar{t}<t<T.
\end{aligned}
\end{equation}
We substitute \eqref{eq315} into \eqref{eq316}, so we have
\begin{equation*}
\exp\{G(t)\}\,\le\,\exp\left\{G(\bar t)-\,(t-\bar t)\right\}\quad \text{for any}\,\,\,\bar{t}<t<T.
\end{equation*}
Thus, for any $\bar t<t<T$,
\begin{equation}\label{eq319}
G(t)\,\le\,G(\bar t)-\,(t-\bar t\,).
\end{equation}

We now combine \eqref{eq319} together with \eqref{eq320}, hence
$$
0\le G(t)\,\le\,G(\bar t)-\,\left(t +\frac{1}{\alpha}\log(\Phi_x(0)) -\frac{\log\delta}{\alpha}\right) \quad  \text{for any} \,\,\, \bar t<t<T.
$$
Hence
\begin{equation}\label{eq325}
\log(\Phi_x(0))\,\le \, \alpha G(\bar t)+\log\delta -\alpha \,\,t\, \quad  \text{for any} \,\,\, \bar t<t<T.
\end{equation}
We now take the exponential of both sides of \eqref{eq325}. Thus we get, for any $\bar t<t<T$,
$$
\begin{aligned}
\Phi_x(0)&\le \exp\left\{\alpha G(\bar t)+\log\delta -\alpha \,\,t\right\}\\
&= C\,\exp\{-\alpha\,t\}\,,
\end{aligned}
$$
where
\begin{equation}\label{eqf2}
C:=\exp\{\alpha G(\bar t)+\log\delta\}\,.
\end{equation}
This is the inequality \eqref{eq34}.
\end{proof}

\subsection{Proof of Theorem \ref{teo1}}
\begin{proof}
Take any $\delta>0$ fulfilling \eqref{eq34a}.  We suppose, by contradiction, that $u$ is a global solution of problem \eqref{problema}.
Since $\alpha:=h'(0)>\lambda_1(M)$, there exists $\varepsilon\in (0,\alpha-\lambda_1(M))$ such that
$$
\alpha>\lambda_1(M)+\varepsilon.
$$
Let $\Omega \subset M$ be such that
$$
\mu(\Omega)<+\infty \quad \text{and}\quad \int_{\Omega} u_0 \,  d\mu >0.
$$
Then
\begin{equation}\label{eq327}
\lim_{T\to +\infty} e^{\alpha-(\lambda_1(M)+\varepsilon)]T}=+\infty.
\end{equation}
Fix any arbitrary $x\in M$. By Lemma \ref{lemma1} and Lemma \ref{lemma2},
$$
C_1 \, e^{-[\lambda_1(M)+\varepsilon]\,T}\,\,\le\,\,(e^{T\Delta}u_0)(x)\,\,\le\,\, C\,e^{-\alpha\, T}, \quad \text{for any}\,\,\, T>\max\{t_0, \bar t\,\},
$$
where $t_0>0$, $C_1>0$ are given in Lemma \ref{lemma1}, while $\bar t>0$, $C>0$ in Lemma \ref{lemma2}. Hence, if $u$ exists globally in time, we would have
\begin{equation}\label{eq326}
e^{[\alpha-(\lambda_1(M)+\varepsilon)]\,T}\,\,\le\,\, \frac{C}{C_1} \quad \text{for any}\,\,\, T>\max\{t_0, \bar t\,\}.
\end{equation}
Nonetheless, due to \eqref{eq327}, the left hand side of \eqref{eq326} tends to $+\infty$ as $T\to \infty$.
Thus, we have a contradiction. Hence the thesis follows.


\end{proof}

\section{Global existence}\label{existence}

Consider the linear Cauchy problem for the heat equation
\begin{equation}\label{eq51}
\begin{cases}
&v_t=\Delta v\quad\,\, \text{in}\,\,\, M\times(0,+\infty)\\
&v=u_0\quad\quad \text{in}\,\,\,M\times\{0\},
\end{cases}
\end{equation}
with $u_0$ as in Theorem \ref{teo2}.
Observe that problem \eqref{eq51} admits the classical solution
\begin{equation}\label{eq52}
v(x,t)=\int_M p(x,y,t)\,u_0(y)\,dy, \quad x\in M, t\geq 0.
\end{equation}
Hence, since $u_0\in L^\infty(M)$,
\begin{equation}\label{eq55}
\|v(t)\|_{L^{\infty}(M)}\le\|u_0\|_{L^{\infty}(M)}\quad \text{for any}\,\,\,t>0\,.
\end{equation}

Moreover, since $u_0\in L^1(M)$, if the Faber-Krahn inequality holds, then, due to \eqref{eq26a},
\begin{align}
v(x,t) \le \bar{C}\, \|u_0\|_{L^1(M)}\,e^{-\lambda_1 t}\quad \text{for any}\,\,\,x\in M, t>1, \label{eq54}
\end{align}
where $\bar C$ has been defined in \eqref{eq26a}.

\smallskip

Let $\{\Omega_j\}_{j\in\mathbb{N}}\subset M$ be a sequence of domains such that
$$
\begin{aligned}
&\Omega_j\subset \Omega_{j+1}\quad \text{for any}\,\,\,j\in \mathbb{N},\\
&\bigcup_{j\in \mathbb N} \Omega_j=M,\\
&\partial\Omega_j\,\,\,\text{is smooth for every}\,\,\,j\in\mathbb{N}.
\end{aligned}
$$
Furthermore, for every $j\in\mathbb{N}$ let $\zeta_j\in C_c^{\infty}(\Omega_j)$ be such that $0\le \zeta_j\le 1$, $\zeta_j\equiv 1$ in $\Omega_{j/2}$.

\begin{proof}[Proof of Theorem \ref{teo2}] We consider initial data $u_0$ satisfying \eqref{eq12a} and \eqref{eq13a}.
Define
\begin{equation*}
\bar{u}(x,t):= e^{\alpha \,t}\,v(x,t),\quad x\in M, t\geq 0,
\end{equation*}
with $v$ and $\alpha$ given by \eqref{eq52} and \eqref{N1}, respectively.

Note that, due to \eqref{eq12a} and \eqref{eq55}, for any $x\in M, t\in(0,1]$,
\begin{equation}\label{eq59}
0\leq \bar u(x,t)\leq e^{\alpha\,t}\|v(t)\|_{L^{\infty}(M)}\,\le\, e^{\alpha\, t}\|u_0\|_{L^{\infty}(M)}\,\le\,\delta.
\end{equation}
Moreover, due to \eqref{eq13a}, \eqref{eq54}, since $\alpha\leq \lambda_1(M)$, for any $t>1$ we get
\begin{equation}\label{eq510}
0\leq \bar u(x,t)\leq e^{\alpha\,t}\|v(t)\|_{L^{\infty}(M)}\le\, \bar{C}\,\|u_0\|_{L^1(M)}e^{-(\lambda_1-\alpha)\, t}\le \,\delta.
\end{equation}
Inequalities \eqref{eq59} and \eqref{eq510} yield
\begin{equation}\label{eq58}
0\,\le\,\bar{u}(x,t)\,\le\, \delta\quad \text{for any}\,\,\,x\in M,\,\, t>0.
\end{equation}
Furthermore, we have
$$
\bar{u}_t-\Delta\bar{u}-f(\bar u)=\alpha\,e^{\alpha\,t}\,v+e^{\alpha\,t}\,v_t-e^{\alpha\,t}\Delta v-f(\bar u).
$$
Now, by using the fact that $v$ is a classical solution to problem \eqref{eq51}, due to \eqref{N1} and \eqref{eq58}, we get
\begin{equation}\label{eq511}
\bar{u}_t-\Delta\bar{u}-f(\bar u)\ge \alpha \bar u - f(\bar u) \geq \,0.
\end{equation}
Hence $\bar u$ is a weak supersolution to problem \eqref{problema} in $M\times(0,\infty)$.

For any $j\in \mathbb{N}$ there exists a unique classical solution $u_j$ to problem
\begin{equation}\label{eq56}
\begin{cases}
& \partial_t u=\Delta u + f(u)\quad \text{in}\,\,\,\Omega_j\times (0,T)\\
& u=0\quad\quad\quad\quad \quad \quad \text{in}\,\,\,\partial\Omega_j\times (0,T)\\
&u=\zeta_j\,u_0 \quad\quad\quad \quad\,\,\, \text{in}\,\,\,\Omega_j\times \{0\}\,.
\end{cases}
\end{equation}
Clearly, $u_j\not\equiv 0$ because $u_0\,\zeta_j\not\equiv 0$ in $\Omega_j$. Moreover, for any $j\in \mathbb{N}$, in view of \eqref{eq511}, since
\[ v= u_0\geq \zeta_j u_0 \quad \text{ in }\,\, M\times \{0\},\]
$\bar u$ is a bounded weak supersolution of problem \eqref{eq56}. Obviously, for any $j\in \mathbb{N}$, $\underline u\equiv 0$ is a subsolution to problem \eqref{eq56}.
Hence, by the comparison principle, for every $j\in\mathbb{N}$ we obtain
\begin{equation}\label{eq512}
0\,\le\,u_j\,\le\,\bar{u}\quad \text{for any}\,\,\,(x,t)\in \Omega_j\times(0,+\infty).
\end{equation}
By standard a priori estimates (see, e.g.,  \cite[Chapter 5]{LSU}), we can infer that there exists a subsequence $\{u_{j_{k}}\}$ of $\{u_{j}\}$, which converges in $C^{2,1}_{x,t}(K\times[\varepsilon, T])$ as $k\to+\infty$, for each compact subset $K\subset M$ and for each $\varepsilon\in (0,T)$, and in $C_{loc}(M\times[0,T])$, to some function $u\in C^{2,1}_{x,t}(M\times(0,T])\cap C(M\times[0,T])$, which is a classical solution to problem \eqref{problema}. Moreover, from \eqref{eq512} we get
$$
0\le u\le\bar{u} \quad\text{in}\,\,\, M\times(0,+\infty).
$$
Hence the thesis follows.
 \end{proof}

\normalcolor

\section{On blow-up of solutions for large data}\label{fr}
In this section we discuss a blow-up result that can be obtained by standard tools. More precisely, we show that the solution to problem \eqref{problema} blows up, provided that $u_0$ is large enough, and
$f:[0, +\infty)\to [0, +\infty)$ is a locally Lipschitz, increasing, convex function fulfilling
\begin{equation*}
\int^{+\infty}\frac 1{f(s)} ds<+\infty\,\,.
\end{equation*}
We need to introduce some preliminary material. Let $o\in M$ be a reference point and $r(x)$ be the geodesic distance between $x$ and $o$.
For any $x\in M\setminus\{o\}$, denote by $\Rico$ the \textit{Ricci curvature} at $x$ in the radial direction $\frac{\partial}{\partial r}$. We assume that
\[\Rico(x) \geq - (N-1)\frac{\psi''(r(x))}{\psi(r(x))}\quad \text{ for all }\, x\in M\setminus\{o\}\,,\]
for some $\psi\in C^\infty((0, +\infty))\cap C^1([0, +\infty))$ such that $\psi'(0)=1, \psi(0)=0, \psi>0 \text{ in  } (0, +\infty)$
and
\begin{equation}\label{bound}\int^{+\infty}\frac{\int_0^r \psi^{N-1}(\xi)d\xi}{\psi^{N-1}(r)}dr=+\infty\,.\end{equation}
In view of such hypothesis, for problem \eqref{problema} comparison principle for bounded sub-- and supersolutions holds (see, e.g., \cite{Grig3}, \cite{Pu3}). Condition \eqref{bound} may be stated informally in a quite simpler way: a sufficient condition for this to hold is that
\begin{equation}\label{bound2}
\Rico(x) \geq -c r(x)^2 \quad \text{ for all }\, x\in M\setminus\{B_1(o)\}
\end{equation}
and a suitable $c>0$, as can be seen by choosing $\psi$ to be $e^{kr^2}$ in a neighborhood of infinity, for a suitable $k>0$.

Let $D$ be an open precompact subset of $M$ with smooth boundary.
By Kaplan's method (see \cite{K}) it can be proved that, for some $v_0\in C(\bar D), v_0\geq 0$ large enough, any solution $v$ to problem
\begin{equation}\label{N10}
\begin{cases}
\, v_t= \Delta v +\,f(v) & \text{in}\,\, D\times (0,T) \\
\, v = 0 & \text{in}\,\, \partial D\times (0, T)\\
\,\; v =v_0 &\text{in}\,\, D\times \{0\}\,
\end{cases}
\end{equation}
blows up in finite time. Now, consider $u_0\in C(M), u_0\geq0 $ with compact support. Take any $D$ as above containing the support of $u_0$ and set $v_0:=u_{0\lfloor{D}}$.
By choosing $u_0$ big enough, and so $v_0$, the solution $v$ to \eqref{N10}, corresponding to such $v_0$, blows up in a finite time, say $\tau>0$. Let
\[\underline u:=\begin{cases}
v& \text{in } D\times (0, \tau)\\
0& \text{in }(M\setminus D)\times (0, \tau)\,.
\end{cases}\]
By the maximum principle,
\[v\geq 0 \quad \text{ in }\,\, D\times (0, \tau)\,.\]
Hence,
\[\frac{\partial v}{\partial n}\leq 0 \quad \text{ in }\,\, \partial D\times (0, \tau),\]
$n$ being the outer unit normal vector to $\partial D$. This easily implies that, for any $0<T<\tau$, $\underline u$ is a bounded
weak subsolution to problem \eqref{problema}. So, by comparison principle, for any solution $u$ to problem \eqref{problema},
\[u\geq \underline u\quad \text{ in }\,\, M\times (0, T)\,.\]
Since $v$ blows in finite time, the same holds for $\bar u$ and so for $u$.

\par\bigskip\noindent
\textbf{Acknowledgments.}
The authors are members of the Gruppo Nazionale per l'Analisi Matematica, la Probabilit\`a e le loro Applicazioni (GNAMPA, Italy) of the Istituto Nazionale di Alta Matematica (INdAM, Italy) and are partially supported by the PRIN project 201758MTR2: ``Direct and Inverse Problems for Partial Differential Equations: Theoretical Aspects and Applications'' (Italy).

%
%
\bigskip
\bigskip
\bigskip

%


\end{document}